\documentclass[a4paper,10pt]{article}
\usepackage[utf8]{inputenc}
\usepackage{amsmath}
\usepackage{amssymb}
\usepackage{amsthm}
\usepackage{dsfont}
\usepackage{hyperref}
\usepackage{graphicx}
\usepackage{enumerate}
\usepackage{booktabs}
\usepackage{arydshln}
\usepackage{mathtools}

\newcommand{\R}{\mathbb{R}}
\newcommand{\E}{\mathbb{E}}
\newcommand{\Per}{\mathcal{P}}
\newcommand{\Opp}{\mathcal{O}}

\newtheorem{theorem}{Theorem}[section]
\newtheorem{proposition}[theorem]{Proposition}
\newtheorem{lemma}[theorem]{Lemma}
\newtheorem{definition}[theorem]{Definition}

\newtheorem{remark}[theorem]{Remark}

\title{On Majorization in Dependence Modeling}
\author{Michael Preischl}

\begin{document}

\maketitle
{\abstract{
We apply concepts of majorization theory to derive new insights in the field of extremal dependence structures. In particular, we consider the Rearrangement Algorithm by Puccetti and Rüschendorf, where majorization arguments yield a statement that unifies and extends the existing theory in two ways. The first extension considers convex functions of non-linear risk aggregation and the second allows for non-symmetric cost functions. The article is concluded by computing an example.}}
\section{Introduction and existing theory}
When talking about dependence uncertainty, a major interest usually lies on finding a dependence structure that maximizes or minimizes a certain expectation. That is, given distributions $F^{(1)},\dots,F^{(d)}$ and a function $f:\R^d\to\R$, what are
\begin{align*}
 &\inf\left\{\E\left[f(X^{(1)},\dots,X^{(d)})\right]:X^{(i)}\sim F^{(i)}, 1\leq i\leq d\right\}\quad\text{and}\\
 &\sup\left\{\E\left[f(X^{(1)},\dots,X^{(d)})\right]:X^{(i)}\sim F^{(i)}, 1\leq i\leq d\right\}?
\end{align*}
For a start, assume that $X^{(1)},\dots,X^{(d)}$ are discrete and take $n$ not necessarily distinct values with an equal probability of $\frac1n$. In this case $X^{(i)}$ is said to have an \textit{$n$-discrete} distribution for $i=1,\dots,d$. Denote by $x^{(i)}=(x^{(i)}_1,\dots,x^{(i)}_n)^T$ a vector containing the possible values for each $X^{(i)}$. Thus the $n\times d$ matrix $X=(x^{(1)},\dots,x^{(d)})$ can be interpreted as the joint distribution of a random vector $X=(X^{(1)},\dots,X^{(d)})$, where $X^{(i)}\sim F^{(i)}$ by giving each of its rows equal probability $\frac1n$. Obviously, for this distribution, we have
\[
 \E[f(X)]=\frac1n\sum_{k=1}^nf(x^{(1)}_k,\dots,x^{(d)}_k).
\]
If we now rearrange the entries in one or multiple columns of $X$, we get a new distribution $\tilde X$ which has the same marginals as $X$. Write
\[
\mathcal{P}(X)=\{(\tilde{x}^{(1)},\dots,\tilde{x}^{(d)}): \tilde{x}^{(i)}=\pi_i x^{(i)}\text{, }\pi_i\text{ is a permutation on }\{1,\dots,n\}\}
\]
for the set of matrices that can be obtained from $X$ by permuting elements within columns. In the following we will try to find
\[
m_f(X):=\min_{\tilde{X}\in\mathcal{P}}\sum_{k=1}^nf(\tilde{x}^{(1)}_k,\dots,\tilde{x}^{(d)}_k).
\]
Note that the factor $\frac1n$ was omitted since it does not affect optimality. This is the setting, Puccetti and Rüschendorf considered in \cite{puccetti2015expectation}. They give conditions under which the minimum is attained at a matrix which is in some sense oppositely ordered. More precisely, assume $h:\R^d\to\R$ can be decomposed into two functions $h^2:\R^2\to\R$ and $h^{d-1}:\R^{d-1}\to\R$ such that for all $x\in\R^d$
\begin{equation}\label{decompose_property}
 h(x_1,\dots,x_d)=h^2(x_i,h^{d-1}(x_1,\dots,x_{i-1},x_{i+1},\dots,x_d))\qquad\forall i=1,\dots,d.
\end{equation}
We write $X_{-i}$ for the matrix $(x^{(1)},\dots,x^{(i-1)},x^{(i+1)},\dots,x^{(d)})$, i.e. $X$ with the $i$th column deleted and
\[
 h^{d-1}(X_{-i}):=\begin{pmatrix}
       h^{d-1}\big(x^{(1)}_1,\dots,x^{(i-1)}_1,x^{(i+1)}_1,\dots,x^{(d)}_1\big)\\
       h^{d-1}\big(x^{(1)}_2,\dots,x^{(i-1)}_2,x^{(i+1)}_2,\dots,x^{(d)}_2\big)\\
       \vdots\\
       h^{d-1}\big(x^{(1)}_n,\dots,x^{(i-1)}_n,x^{(i+1)}_n,\dots,x^{(d)}_n\big)
      \end{pmatrix}
\]
for the vector that is $h^{d-1}$ applied to every row of $X_{-i}$.\\
Then define
\[
\Opp_h(X):=\{\tilde{X}\in\Per(X):\tilde{x}^{(i)}\perp h^{d-1}(\tilde{X}_{-i})\text{, }i=1,\dots,d\}.
\]
Here $x\perp y$ denotes that the vectors $x$ and $y$ are oppositely ordered, meaning: for $x,y\in\R^n: x\perp y \iff (x_i-x_j)(y_i-y_j)\leq 0$ for all $i,j\in\{1,\dots,n\}$.
A particular choice for $h$ which satisfies \eqref{decompose_property} would be the sum operator, i.e. $h(x_1,\dots,x_d)=x_1+\dots+x_d$. 
Now Puccetti and Rüschendorf state two cases in which the minimum is attained at an element of $\Opp_h(X)$ (see Propositions $2.4$ and $2.6$ from \cite{puccetti2015expectation}):
\begin{theorem}\label{Pucc2.4}
 If $f(x^{(1)},\dots,x^{(d)})=g(h(x^{(1)},\dots,x^{(d)}))$ where $g:\R\to\R$ is convex and $h(x_1,\dots,x_d)=x_1+\dots+x_d$ is the sum operator, then
 \[
  m_f(X)=\min_{\tilde{X}\in\Opp_h(X)}\sum_{k=1}^nf(\tilde{x}^{(1)}_k,\dots,\tilde{x}^{(d)}_k).
 \]
\end{theorem}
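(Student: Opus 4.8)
The plan is to start from an arbitrary minimizer of the objective over $\Per(X)$ and to transform it, by local moves that never increase the objective, into an element of $\Opp_h(X)$. The inequality ``$\geq$'' is free because $\Opp_h(X)\subseteq\Per(X)$; the substance is to produce a minimizer lying in $\Opp_h(X)$. Throughout, write $\Phi(\tilde X):=\sum_{k=1}^n g\big(\tilde x^{(1)}_k+\dots+\tilde x^{(d)}_k\big)$ and $s(\tilde X):=\tilde x^{(1)}+\dots+\tilde x^{(d)}\in\R^n$ for the vector of aggregated values. Since $\Per(X)$ is finite, $\Phi$ attains its minimum $m_f(X)$ at some $\tilde X$.

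The basic local move is a single uncrossing swap inside one column. Fix $i$ and put $b:=h^{d-1}(\tilde X_{-i})=\sum_{j\neq i}\tilde x^{(j)}$, so that the $k$-th summand of $\Phi$ is $g(\tilde x^{(i)}_k+b_k)$. If $\tilde x^{(i)}$ is not oppositely ordered to $b$, there are rows $p,q$ with $\tilde x^{(i)}_p>\tilde x^{(i)}_q$ and $b_p>b_q$; exchanging the entries $\tilde x^{(i)}_p$ and $\tilde x^{(i)}_q$ replaces the pair $\{\tilde x^{(i)}_p+b_p,\ \tilde x^{(i)}_q+b_q\}$ by $\{\tilde x^{(i)}_q+b_p,\ \tilde x^{(i)}_p+b_q\}$, which has the same sum but whose two entries both lie strictly between the two old ones. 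Convexity of $g$ then gives $g(\tilde x^{(i)}_q+b_p)+g(\tilde x^{(i)}_p+b_q)\leq g(\tilde x^{(i)}_p+b_p)+g(\tilde x^{(i)}_q+b_q)$, so $\Phi$ does not increase; this is just the two-point case of the Hardy--Littlewood--P\'olya rearrangement inequality, equivalently the Schur-convexity of $t\mapsto\sum_k g(t_k)$. (One could instead rearrange the whole column $i$ at once to be oppositely ordered to $b$; single swaps are simply more convenient for the next step.)

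Now iterate: as long as some column admits such a pair $(p,q)$, perform the corresponding swap. The point where care is genuinely needed is termination, because $\Phi$ is only non-increasing --- indeed it stays equal to $m_f(X)$ --- and so cannot itself certify progress. This is exactly where majorization enters: an uncrossing swap is a Pigou--Dalton transfer on the aggregate vector $s(\tilde X)$, so $s$ strictly decreases in the majorization order (equivalently, $\sum_k s_k^2$ strictly decreases). Since only finitely many aggregate vectors can arise from the finitely many rearrangements in $\Per(X)$, the iteration halts after finitely many steps. It can only halt when no column $i$ admits such a pair, i.e.\ when $\tilde x^{(i)}\perp h^{d-1}(\tilde X_{-i})$ for every $i$ --- precisely the defining condition of $\Opp_h(X)$.

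Putting the pieces together: beginning at a minimizer of $\Phi$ over $\Per(X)$ and running this iteration yields an $\tilde X\in\Opp_h(X)$ with the same, still minimal, value $\Phi(\tilde X)=m_f(X)$; together with the trivial inequality this is the assertion. The only delicate point, as indicated, is ruling out cycling among rearrangements that all realize the minimum of $\Phi$, and the strict monotonicity of the majorization level (or of $\sum_k s_k^2$) along uncrossing swaps is what settles it.
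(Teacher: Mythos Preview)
Your argument is correct and is essentially the same as the one the paper gives (implicitly, via the proof of Theorem~\ref{weak_statement} specialized to the sum, together with the remark invoking Proposition~\ref{opposite_ordering_sum}). Both proceed by single uncrossing swaps in a violating column, use convexity of $g$ (Schur-convexity of $\sum_k g(\cdot)$) to keep the objective from increasing, and handle the only delicate point---termination---by observing that each swap strictly moves the aggregate vector down in the majorization order; your use of $\sum_k s_k^2$ as an explicit potential is just a convenient repackaging of that same strict-majorization argument.
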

\begin{theorem}\label{Pucc2.6}
 If $h:\R^d\to\R$ is supermodular, componentwise strictly monotonic and satisfies \eqref{decompose_property} with $h^2$ and $h^{d-1}$ also being supermodular, then
 \[
  m_h(X)=\min_{\tilde{X}\in\Opp_h(X)}\sum_{k=1}^nh(\tilde{x}^{(1)}_k,\dots,\tilde{x}^{(d)}_k).
 \]
\end{theorem}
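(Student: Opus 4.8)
The plan is to combine a one-column-at-a-time rearrangement argument with a finiteness argument over $\Per(X)$. Since $\Opp_h(X)\subseteq\Per(X)$, the inequality $\min_{\tilde X\in\Opp_h(X)}\sum_k h(\tilde{x}^{(1)}_k,\dots,\tilde{x}^{(d)}_k)\ge m_h(X)$ is automatic, so the whole content is to produce one minimizer of $\sum_k h(\cdot)$ over $\Per(X)$ that lies in $\Opp_h(X)$ (this also shows $\Opp_h(X)\neq\emptyset$). The basic tool is the classical rearrangement (Hardy--Littlewood--P\'olya/Lorentz) inequality in the form: if $\phi:\R^2\to\R$ is supermodular and $a,b\in\R^n$, then $\sum_{k=1}^n\phi(a_{\pi(k)},b_k)$ is minimal over permutations $\pi$ exactly for those $\pi$ for which $a_\pi$ is oppositely ordered to $b$; in particular such a minimizing $\pi$ always exists. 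This follows from the standard exchange argument: any $\pi$ is brought into opposite order by successively transposing a pair that is comonotone in the two vectors, and each such transposition changes the sum by $\phi(a_i,b_j)+\phi(a_j,b_i)-\phi(a_i,b_i)-\phi(a_j,b_j)$, which is $\le 0$ precisely because $\phi$ is supermodular. Now fix $\tilde X\in\Per(X)$ and $i\in\{1,\dots,d\}$; by \eqref{decompose_property},
\[
 \sum_{k=1}^n h\big(\tilde{x}^{(1)}_k,\dots,\tilde{x}^{(d)}_k\big)=\sum_{k=1}^n h^2\big(\tilde{x}^{(i)}_k,\,(h^{d-1}(\tilde X_{-i}))_k\big),
\]
and $h^{d-1}(\tilde X_{-i})$ does not change when only the $i$-th column is rearranged. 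Applying the tool with $\phi=h^2$, $a=\tilde{x}^{(i)}$, $b=h^{d-1}(\tilde X_{-i})$ shows that replacing the $i$-th column of $\tilde X$ by an arrangement oppositely ordered to $h^{d-1}(\tilde X_{-i})$ never increases $\sum_k h(\cdot)$, and that among the arrangements of the $i$-th column minimizing $\sum_k h(\cdot)$ (with the other columns frozen) there is always one satisfying this opposite-ordering condition.

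Since $\Per(X)$ is finite, $m_h(X)$ is attained; fix such a minimizer $Y$. Define a sweep operator $\Gamma:\Per(X)\to\Per(X)$ which, running through $i=1,\dots,d$ in order, leaves the $i$-th column unchanged if it is already oppositely ordered to $h^{d-1}$ of the currently remaining columns and otherwise replaces it by an arrangement that has this property. By the previous step $\Gamma$ never increases $\sum_k h(\cdot)$, so the orbit $Y,\Gamma(Y),\Gamma^2(Y),\dots$ stays inside the finite set $M$ of minimizers of $\sum_k h(\cdot)$ over $\Per(X)$, hence is eventually periodic. A fixed point $Z=\Gamma(Z)$ satisfies $z^{(i)}\perp h^{d-1}(Z_{-i})$ for all $i$ straight from the definition of $\Gamma$ (later coordinates in a sweep never touch earlier columns), i.e.\ $Z\in\Opp_h(X)$ with $\sum_k h(z^{(1)}_k,\dots,z^{(d)}_k)=m_h(X)$. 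So it is enough to show that the orbit actually reaches a fixed point, i.e.\ that it contains no nontrivial cycle.

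This last point is where I expect the real work and where the remaining hypotheses enter. Along the orbit every $\Gamma$-step is $\sum_k h(\cdot)$-flat (the orbit lies in $M$), so each column re-sort decomposes into transpositions of comonotone pairs along which $h^2$ behaves modularly. Componentwise strict monotonicity of $h$---which through \eqref{decompose_property} also forces $h^{d-1}$ to be componentwise strictly monotone---implies that each such transposition inside the $i$-th column moves the affected entries of every other reference vector $h^{d-1}(\tilde X_{-j})$, $j\neq i$, in a fixed, order-consistent direction; combined with supermodularity of $h^{d-1}$ (and of $h$) this should let one attach to the matrices of $M$ a secondary potential---built, say, from the inner products $\langle\tilde{x}^{(i)},h^{d-1}(\tilde X_{-i})\rangle$ or from the associated rank vectors---that strictly decreases at every nontrivial $\Gamma$-step. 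Finiteness of $M$ then rules out nontrivial cycles, so the orbit terminates at a fixed point, which lies in $\Opp_h(X)$ and attains $m_h(X)$. Identifying the correct secondary potential and verifying that a single intra-column transposition genuinely decreases it---that is, controlling how a swap in one column propagates through $h^{d-1}$ into the orderings governing the remaining columns---is the technical heart and the step I expect to be the main obstacle.
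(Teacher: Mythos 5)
Your skeleton---the frozen-columns exchange argument using supermodularity of $h^2$, a column-by-column re-sorting operator, and a reduction to excluding nontrivial cycles on a finite set---is essentially the strategy behind the paper's treatment (the paper obtains Theorem \ref{Pucc2.6} as the special case $g=\mathrm{id}$ of Theorem \ref{weak_statement}, whose proof has exactly this structure). But the step you explicitly leave open is not a routine verification; it is the technical heart, and without it the proof is incomplete. Since you confine the orbit of your sweep operator $\Gamma$ to the set $M$ of minimizers, every step is flat for the objective $\sum_k h(\cdot)$, so the objective itself cannot serve as a potential, and nothing in your argument prevents the column re-sorts from cycling forever without ever producing a matrix in $\Opp_h(X)$. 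The ``secondary potential'' you conjecture (inner products, rank vectors) is never constructed, so termination---and with it the entire conclusion---remains unproved.

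The paper closes precisely this gap, and with a cleaner object than the ones you float: the potential is the aggregated vector $h(\tilde{X})\in\R^n$ itself, compared in the weak submajorization preorder $\prec_w$. Re-sorting column $i$ against $b:=h^{d-1}(\tilde{X}_{-i})$ gives $h(\hat{X})\prec_w h(\tilde{X})$ by Proposition \ref{opposite_ordering_gen} (supermodularity and monotonicity of $h^2$ enter here). Strictness is where componentwise strict monotonicity of $h$ is used: when a violating pair of indices $\underline{k},\overline{k}$ (with $\tilde{x}^{(i)}_{\underline{k}}<\tilde{x}^{(i)}_{\overline{k}}$ and $b_{\underline{k}}<b_{\overline{k}}$) is transposed, exactly two entries of $h(\tilde{X})$ change and, by strict monotonicity of $h^2$, the two new values lie strictly between the two old ones; hence $h(\hat{X})$ is not a permutation of $h(\tilde{X})$, and since mutual weak submajorization forces equality up to permutation, $h(\hat{X})$ lies strictly below $h(\tilde{X})$ in $\prec_w$. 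A strictly decreasing quantity on the finite set $\Per(X)$ cannot cycle, so the procedure terminates at an element of $\Opp_h(X)$ whose objective value is no larger than the one you started from---exactly the strict monotone invariant your proposal is missing, and one that works even along objective-flat steps. (A minor separate inaccuracy: your claim that the frozen-column minimum is attained \emph{exactly} at the oppositely ordered arrangements is false without strict supermodularity of $h^2$; you only use the existence direction, so this does not damage the argument.)
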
$ $\\
Remember that a function $h:\R^d\to \R$ is called \textit{supermodular} if\\
$h(x)+h(y)\leq h(x\wedge y)+h(x\vee y)$, where $\wedge$ and $\vee$ denote the componentwise minimum respectively maximum. Some authors prefer the term \textit{$L$-superadditive} instead of supermodular.\\
The concept (and the implementation) of rearranging the matrix $X$ until obtaining an element in $\mathcal{O}_h(X)$ is the rearrangement algorithm (RA).
$ $\\
\section{Main results}
As is noted in \cite{puccetti2015expectation}, Theorem \ref{Pucc2.4} and Theorem \ref{Pucc2.6} consider two distinct cases, though the difference might seem subtle. We will clarify this later, but for the moment, we want to mention that a minor extra assumption allows us to unify these cases.
\begin{theorem}\label{weak_statement}
 If $f(x^{(1)},\dots,x^{(d)})=g(h(x^{(1)},\dots,x^{(d)}))$ where $g:\R\to\R$ is increasing and convex and $h:\R^d\to\R$ is supermodular, componentwise strictly monotonic and satisfies \eqref{decompose_property} with $h^2$ also being supermodular, then 
 \[
  m_f(X)=\min_{\tilde{X}\in\Opp_h(X)}\sum_{k=1}^nf(\tilde{x}^{(1)}_k,\dots,\tilde{x}^{(d)}_k).
 \]
\end{theorem}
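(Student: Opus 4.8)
The plan is to follow the route of the proofs of Theorems~\ref{Pucc2.4} and~\ref{Pucc2.6} in~\cite{puccetti2015expectation}, reducing the whole statement to a single--column rearrangement step; the new work lies entirely in establishing that step for $f=g\circ h$ under the present hypotheses. Since $\Per(X)$ is finite, $m_f(X)$ is attained, so --- the inequality $m_f(X)\le\min_{\tilde X\in\Opp_h(X)}\sum_k f(\tilde x_k)$ being trivial --- it suffices to exhibit some $\tilde X\in\Opp_h(X)$ with $\sum_k f(\tilde x_k)=m_f(X)$. As in~\cite{puccetti2015expectation}, this follows once we know that for every fixed $i$ and every fixed choice of the columns other than $i$, the sum $\sum_{k=1}^n f(\tilde x^{(1)}_k,\dots,\tilde x^{(d)}_k)$ is minimised over the rearrangements of the $i$--th column when $\tilde x^{(i)}$ is oppositely ordered to $h^{d-1}(\tilde X_{-i})$; iterating this column by column and verifying that one reaches an element of $\Opp_h(X)$ then goes through exactly as in~\cite{puccetti2015expectation}.

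For the single--column step, fix $i$ and set $y:=h^{d-1}(\tilde X_{-i})$, a fixed vector. By~\eqref{decompose_property},
\[
 \sum_{k=1}^n f(\tilde x^{(1)}_k,\dots,\tilde x^{(d)}_k)=\sum_{k=1}^n g\bigl(h^2(\tilde x^{(i)}_k,y_k)\bigr),
\]
so the problem is to minimise $\sum_{k=1}^n g\bigl(h^2(\tilde x^{(i)}_k,y_k)\bigr)$ over the rearrangements of $\tilde x^{(i)}$. Here majorization enters: if $h^2$ is supermodular and componentwise monotone, then for \emph{every} increasing convex function $\varphi$ the kernel $\varphi\circ h^2$ is supermodular, and the rearrangement inequality then shows that the vector $\bigl(h^2(\tilde x^{(i)}_k,y_k)\bigr)_{k=1}^n$ produced by the opposite ordering $\tilde x^{(i)}\perp y$ is weakly majorized by the vector produced by any other rearrangement; since $v\mapsto\sum_k g(v_k)$ is increasing and Schur--convex (because $g$ is increasing and convex), it respects weak majorization, and the opposite ordering is therefore optimal. (For this particular conclusion one may also argue more directly: apply the Hardy--Littlewood--P\'olya rearrangement inequality to the single kernel $g\circ h^2$, once it is known to be supermodular.) The componentwise strict monotonicity of $h$ is used precisely to make $h^2$ componentwise monotone; I would reduce to the case where $h$, and hence $h^2$, is increasing in each argument.

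The crux --- and the step I expect to be the main obstacle --- is thus the composition lemma: $\varphi\circ h^2$ is supermodular whenever $\varphi$ is increasing and convex. This genuinely needs the monotonicity of $h^2$ and not merely that $\varphi$ is increasing and convex and $h^2$ supermodular: with $\varphi=\exp$ and $h^2(u,v)=uv$ the composition $e^{uv}$ fails to be supermodular on all of $\R^2$. Granting that $h^2$ is supermodular and increasing in each argument, the verification is elementary: for $u\le u'$ and $v\le v'$ put $a=h^2(u,v)$, $b=h^2(u,v')$, $c=h^2(u',v)$, $d=h^2(u',v')$; monotonicity gives $a\le c$ and $a\le b\le d$, supermodularity gives $d-c\ge b-a$, and hence, using first that $\varphi$ is increasing and then that $\varphi$ is convex,
\[
 \varphi(d)-\varphi(c)\ge\varphi\bigl(c+(b-a)\bigr)-\varphi(c)\ge\varphi\bigl(a+(b-a)\bigr)-\varphi(a)=\varphi(b)-\varphi(a),
\]
which is exactly the supermodularity inequality for $\varphi\circ h^2$ at the pair $(u,v)$, $(u',v')$. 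Once this lemma is in place, the column--by--column iteration and the verification that it terminates at an element of $\Opp_h(X)$ realising $m_f(X)$ are inherited without change from~\cite{puccetti2015expectation}; so I expect essentially all of the genuinely new effort to be concentrated in this composition lemma together with the bookkeeping about monotonicity directions.
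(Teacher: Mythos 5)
Your treatment of the single-column step is essentially the paper's: reordering the offending column oppositely to $h^{d-1}(\tilde X_{-i})$ makes the aggregated vector weakly submajorized by the old one (Proposition \ref{opposite_ordering_gen}), and $v\mapsto\sum_k g(v_k)$ is Schur-convex and increasing (Propositions \ref{Schur_con1} and \ref{Schur_con2}), which gives \eqref{want_to_prove}. Your alternative route --- proving that $g\circ h^2$ is supermodular and then invoking a Hardy--Littlewood--P\'olya/Lorentz rearrangement inequality --- is also legitimate and is exactly the alternative the paper mentions in its remark after the proof; your elementary verification of the composition step reproduces the paper's separate lemma, which is proved there by citing 6.D.2 of \cite{marshall2011inequalities}. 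Up to this point the proposal is sound and matches the paper.

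The gap is the termination claim. You assert that the column-by-column iteration "reaches an element of $\Opp_h(X)$ exactly as in \cite{puccetti2015expectation}", and you state that componentwise strict monotonicity of $h$ is used "precisely to make $h^2$ componentwise monotone". Both points miss where the real work lies: non-strict monotonicity already suffices for the weak-majorization (or Lorentz) inequality, and strictness is consumed entirely by the termination argument, which cannot simply be inherited here. In Theorem \ref{Pucc2.6} the objective is $h$ itself, whereas in the present theorem the objective $g\circ h$ may be flat on large regions (e.g.\ the stop-loss functional), so a rearrangement step can leave $\sum_k f$ unchanged; "the objective strictly decreases" is therefore unavailable as a potential, and a priori the procedure could cycle among equal-cost matrices without ever landing in $\Opp_h(X)$. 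This is precisely what the bulk of the paper's proof addresses: using strict monotonicity of $h^2$ it shows that each swap replaces two entries of $h(\tilde X)$ by values lying strictly between the old ones, so $h(\hat X)$ is not a permutation of $h(\tilde X)$ and hence lies strictly below it in the $\prec_w$ preorder; since $\Per(X)$ is finite, no cycling can occur and the iteration must terminate in $\Opp_h(X)$. Your proposal needs this (or an equivalent potential-function argument based on $h$ alone, not on $f$) spelled out rather than deferred to the cited paper.
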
$ $\\
Note that Theorem \ref{Pucc2.6} is trivially included in \ref{weak_statement}, whereas \ref{Pucc2.4} is not due to the extra monotonicity assumption on $g$. A proof of Theorem \ref{weak_statement} will be given later along with the remark that it is actually possible to derive a stronger conclusion from the assumptions of \ref{weak_statement} than the one given in the Theorem. To see this, we need some notions from the theory of majorization, which are taken from the extremely rich summary by Marshall et al. \cite{marshall2011inequalities}, where also the original authors of the statements cited in this section can be found.\\ 
$ $\\
The concept of majorization allows to find extreme values for certain functions $f:\R^n\to\R$ based on a (partial) ordering of the vectors $x\in\R^n$. Let $(x_{[1]},\dots,x_{[n]})$ denote the decreasing rearrangement of $x$, i.e. $x_{[1]}$ is the largest component of $x$ and $x_{[n]}$ is the smallest. Analogously, we write $(x_{(1)},\dots,x_{(n)})$ for the increasing rearrangement of $x$, i.e. $x_{[1]}=x_{(n)}$ and $x_{[n]}=x_{(1)}$. For reasons of convenience, we will also use the notation $x_\downarrow:=(x_{[1]},\dots,x_{[n]})$ and $x_\uparrow:=(x_{(1)},\dots,x_{(n)})$ to denote the decreasing and increasing rearrangements. Obviously it holds $x_\downarrow\perp x_\uparrow$.
\begin{definition}
 A vector $x=(x_1,\dots,x_n)$ is said to be majorized by $y=(y_1,\dots,y_n)$ (write $x\prec y$) if
 \begin{align*}
  \sum_{i=1}^kx_{[i]}&\leq\sum_{i=1}^ky_{[i]}\qquad k=1,\dots,n-1\\
  \sum_{i=1}^nx_{[i]}&=\sum_{i=1}^ny_{[i]}.
 \end{align*}
 Furthermore, $x$ is said to be weakly submajorized by $y$ (write $x\prec_w y$) if
 \[
  \sum_{i=1}^kx_{[i]}\leq\sum_{i=1}^ky_{[i]}\qquad k=1,\dots,n.
 \]
 Finally, $x$ is said to be weakly supermajorized by $y$ (write $x\prec^w y$) if
 \[
  \sum_{i=1}^kx_{(i)}\geq\sum_{i=1}^ky_{(i)}\qquad k=1,\dots,n.
 \]
\end{definition}$ $\\
Obviously, we have $x\prec y$ if and only if $x \prec_w y$ and $x\prec^w y$.\\
$ $\\
When we observe $x\prec y$, we are naturally interested in the effects of this majorization. This leads to \textit{Schur-convex} functions.
\begin{definition}
 A function $\phi:\R^n\to\R$ is said to be Schur-convex if $x\prec y$ implies $\phi(x)\leq \phi(y)$.
\end{definition}$ $\\
Schur-convexity works with weak majorization as follows
\begin{proposition}[see 3.A.8 in \cite{marshall2011inequalities}]\label{Schur_con2}
 A real valued function $\phi$, defined on $\R^n$ satisfies
 \[
  x\prec_w y \Rightarrow \phi(x)\leq \phi(y)
 \]
 if and only if $\phi$ is Schur-convex and componentwise increasing. Analogously,
 \[
  x\prec^w y \Rightarrow \phi(x)\leq \phi(y)
 \]
 holds if and only if $\phi$ is Schur-convex and componentwise decreasing.
\end{proposition}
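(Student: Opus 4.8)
The plan is to prove the first equivalence in full and then deduce the second one from it by the substitution $\phi\mapsto\phi(-\,\cdot\,)$. This reduction works because of the elementary sign-reversal identities $x\prec_w y\iff -x\prec^w -y$ and $x\prec y\iff -x\prec -y$ (both read off the definitions using $(-z)_{[i]}=-z_{(i)}$), together with the observations that $\phi(-\,\cdot\,)$ is Schur-convex exactly when $\phi$ is, and componentwise increasing exactly when $\phi$ is componentwise decreasing. Applying the first equivalence to $\psi:=\phi(-\,\cdot\,)$ and translating back through these identities yields the second. So it suffices to show that $\big(x\prec_w y\Rightarrow\phi(x)\le\phi(y)\big)$ holds if and only if $\phi$ is Schur-convex and componentwise increasing.

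For the ``only if'' direction I would use two facts that follow straight from the definitions. First, $x\prec y$ implies $x\prec_w y$, so the hypothesis immediately gives Schur-convexity of $\phi$. Second, if $x\le y$ componentwise then the decreasing rearrangements satisfy $x_{[i]}\le y_{[i]}$ for every $i$ --- order statistics are monotone, since if we had $x_{[i]}>y_{[i]}$ then the (at least $i$) indices $j$ with $x_j\ge x_{[i]}$ would all satisfy $y_j\ge x_j>y_{[i]}$, contradicting that $y_{[i]}$ is the $i$-th largest entry of $y$ --- hence $x\prec_w y$, and the hypothesis gives $\phi(x)\le\phi(y)$, i.e.\ $\phi$ is componentwise increasing.

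The substantive direction is ``if'', and its engine is the structural lemma
\[
 x\prec_w y \iff \exists\, u\in\R^n:\ x\le u\ \text{(componentwise)}\ \text{ and }\ u\prec y.
\]
Granting it, the claim is immediate: componentwise monotonicity of $\phi$ gives $\phi(x)\le\phi(u)$, and Schur-convexity gives $\phi(u)\le\phi(y)$. The ``$\Leftarrow$'' half of the lemma is again routine: $x\le u$ gives $x\prec_w u$ by the order-statistics monotonicity above, $u\prec y$ gives $u\prec_w y$, and $\prec_w$ is transitive.

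The ``$\Rightarrow$'' half of the lemma is the step I expect to be the main obstacle. Given $x\prec_w y$, if $\sum_i x_i=\sum_i y_i$ then in fact $x\prec y$ and $u=x$ works, so assume $\sum_i x_i<\sum_i y_i$; one must then raise the entries of $x$ up to total mass $\sum_i y_i$ without ever breaking a partial-sum inequality, landing inside $\{u:u\prec y\}$. I would carry this out by a ``water-filling'' procedure on the sorted coordinates --- repeatedly adding the remaining surplus to the currently smallest coordinate, capped by the amount the majorization constraints still allow --- and verify by induction (on $n$, or on the surplus distributed so far) that the vector produced stays weakly submajorized by $y$ and eventually attains equal sum. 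Alternatively one can argue structurally: $\{u:u\prec y\}$ is the convex hull of the permutations of $y$, $\{x:x\prec_w y\}$ is exactly its downward closure $\{u:u\prec y\}-\R_{\ge0}^n$, and the defining half-spaces of that downward closure are precisely the inequalities of $\prec_w$, so the lemma is just the ``$\subseteq$'' inclusion. Once this lemma is secured, the bookkeeping in the preceding paragraphs completes the proof, and the weak-supermajorization case follows by the negation argument of the first paragraph.
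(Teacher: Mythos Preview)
The paper does not supply its own proof of this proposition; it is quoted verbatim from Marshall, Olkin and Arnold and used as a black box. So there is no in-paper argument to compare against.

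Your proposal is a correct outline of the standard proof found in that reference. The reduction of the supermajorization case to the submajorization case via $\phi\mapsto\phi(-\,\cdot\,)$ is sound, the ``only if'' direction is handled cleanly, and you rightly isolate the interpolation lemma
\[
 x\prec_w y \iff \exists\, u:\ x\le u\text{ and }u\prec y
\]
as the core of the ``if'' direction. Both constructions you sketch for the $\Rightarrow$ half of this lemma are viable: the water-filling argument works if one is careful to raise only the smallest coordinates (so that the partial sums $\sum_{i\le k}u_{[i]}$ for $k<n$ never exceed those of $x$ and hence of $y$), and the permutohedron description of $\{u:u\prec y\}$ together with its downward closure being cut out exactly by the $\prec_w$ inequalities gives the same conclusion structurally. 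Either route, once written out, completes a self-contained proof that the paper itself omits.
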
$ $\\
An important class of Schur-convex functions is given by the following proposition.
\begin{proposition}[see 3.C.1 in \cite{marshall2011inequalities}]\label{Schur_con1}
 If $\psi:\R\to\R$ is convex, then the function $\phi$ defined by
 \[
  \phi(x)=\sum_{i=1}^n\psi(x_i)
 \]
is Schur-convex. Obviously, if $\psi$ is in addition increasing (decreasing) then $\phi$ is Schur-convex and componentwise increasing (decreasing).
\end{proposition}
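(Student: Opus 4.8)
The plan is to reduce the Schur-convexity claim to the behaviour of $\phi$ under a single elementary majorization step, and then invoke the classical combinatorial description of the majorization order. First I would note that $\phi(x)=\sum_{i=1}^n\psi(x_i)$ is permutation-symmetric, so it is harmless to work throughout with vectors written in decreasing order, which is precisely the setting in which $x\prec y$ is phrased via partial sums.

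The key input I would use is the Hardy--Littlewood--P\'olya characterization (see \cite{marshall2011inequalities}): $x\prec y$ holds if and only if $x$ can be obtained from $y$ by finitely many ``Robin Hood'' transfers, equivalently $x=Dy$ for a doubly stochastic matrix $D$. A single transfer leaves all coordinates fixed except for replacing two of them, say $y_i\ge y_j$, by $\lambda y_i+(1-\lambda)y_j$ and $(1-\lambda)y_i+\lambda y_j$ for some $\lambda\in[0,1]$. By induction on the number of transfers it therefore suffices to show that $\phi$ does not increase under one such step, i.e.
\[
 \psi\big(\lambda y_i+(1-\lambda)y_j\big)+\psi\big((1-\lambda)y_i+\lambda y_j\big)\le \psi(y_i)+\psi(y_j),
\]
and this is immediate from the convexity of $\psi$: apply the convexity inequality to each summand on the left and add. (Alternatively, writing $x=Dy$, Jensen's inequality gives $\psi(x_i)=\psi\big(\sum_j D_{ij}y_j\big)\le\sum_j D_{ij}\psi(y_j)$, and summing over $i$ while using $\sum_i D_{ij}=1$ yields $\phi(x)\le\phi(y)$ directly.) Either way we get $x\prec y\Rightarrow\phi(x)\le\phi(y)$, which is Schur-convexity.

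For the addendum, if $\psi$ is in addition increasing then $x\le y$ componentwise forces $\psi(x_i)\le\psi(y_i)$ for every $i$, hence $\phi(x)\le\phi(y)$, so $\phi$ is componentwise increasing; the decreasing case is identical. I expect the only real obstacle to be the characterization of $\prec$ by elementary transfers (or by doubly stochastic matrices): once that is granted — and it is entirely standard, so it may simply be cited — the proposition collapses to a one-line convexity estimate. If one instead wanted a self-contained argument, the hard part would be re-deriving that characterization, e.g.\ via Birkhoff's theorem, or via the representation of a convex function on a compact interval as an affine function plus a nonnegative mixture of the maps $t\mapsto(t-c)_+$, for each of which Schur-convexity follows by hand from the partial-sum inequalities in the definition of majorization.
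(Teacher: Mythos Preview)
Your argument is correct. Both the Robin Hood transfer reduction and the doubly stochastic/Jensen variant are valid, and the addendum on componentwise monotonicity is immediate as you say.

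Note, however, that the paper does not supply its own proof of this proposition: it is simply quoted from \cite{marshall2011inequalities} (3.C.1) and used as a black box. So there is no ``paper's proof'' to compare against in the strict sense. That said, the argument you have written is essentially the standard one given in Marshall--Olkin--Arnold itself (via the Hardy--Littlewood--P\'olya/doubly stochastic characterization of $\prec$), so you are not deviating from the intended route, merely spelling out what the citation points to.
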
$ $\\
Another natural question would be, which operations preserve majorization. It turns out that the ordering of vectors plays an important role here. A famous result for oppositely ordered vectors is the following.
\begin{proposition}[see 6.A.2 in \cite{marshall2011inequalities}]\label{opposite_ordering_sum}
 For any two vectors $x^{(1)}$ and $x^{(2)}$ on $\R^n$, it holds that
 \[
  x^{(1)}_\downarrow+x^{(2)}_\uparrow\prec x^{(1)}+x^{(2)}.
 \]
\end{proposition}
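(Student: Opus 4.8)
The plan is to reduce the statement to a chain of elementary two-coordinate exchanges, each of which visibly lowers the left-hand side in the majorization order, and then to conclude by transitivity of $\prec$.

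\emph{Step 1 (reduction to a sorted factor).} Since $x\prec y$ depends only on the sorted rearrangements of $x$ and $y$, the relation is unaffected when a single permutation $\sigma$ is applied to the coordinates of $x^{(1)}$ and $x^{(2)}$ simultaneously: the left-hand side $(\sigma x^{(1)})_\downarrow+(\sigma x^{(2)})_\uparrow=x^{(1)}_\downarrow+x^{(2)}_\uparrow$ is literally unchanged, while the right-hand side becomes $\sigma(x^{(1)}+x^{(2)})$, a rearrangement of $x^{(1)}+x^{(2)}$. Hence we may assume $x^{(1)}=x^{(1)}_\downarrow$ is already in decreasing order, and it suffices to prove $x^{(1)}+x^{(2)}_\uparrow\prec x^{(1)}+x^{(2)}$.

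\emph{Step 2 (elementary exchange lemma).} I would establish: if $u\in\R^n$ is decreasing and $y'$ is obtained from $y\in\R^n$ by interchanging two entries $y_p,y_q$ with $p<q$ and $y_p>y_q$, then $u+y'\prec u+y$. Writing $v:=u+y$ and $v':=u+y'$, these agree outside coordinates $p,q$ and satisfy $v_p+v_q=v'_p+v'_q$; using $u_p\ge u_q$ and $y_p>y_q$ one checks in one line that $v_p\ge\max(v'_p,v'_q)\ge\min(v'_p,v'_q)\ge v_q$. Thus $(v'_p,v'_q)$ is a convex combination of $(v_p,v_q)$ and $(v_q,v_p)$, so $v'$ lies in the convex hull of permutations of $v$; equivalently — and this can be verified directly from the representation $\sum_{i=1}^k z_{[i]}=\max\{\sum_{i\in T}z_i:\,T\subseteq\{1,\dots,n\},\ |T|=k\}$ — we obtain $\sum_{i=1}^k v'_{[i]}\le\sum_{i=1}^k v_{[i]}$ for every $k$, with equality at $k=n$, i.e. $v'\prec v$.

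\emph{Step 3 (sorting and transitivity).} Taking $u=x^{(1)}$ (decreasing, by Step 1) and starting from $y=x^{(2)}$, one reaches $x^{(2)}_\uparrow$ in finitely many exchanges of the type in Step 2 — for instance by selection sort, repeatedly moving the smallest remaining entry to the left, each such move being either trivial or an admissible exchange because it carries an entry from a position $q$ to a position $p<q$ where $u$ is not larger. Applying the lemma along this chain and using that $\prec$ is transitive (the partial-sum inequalities compose and the total sums are all equal) yields $x^{(1)}+x^{(2)}_\uparrow\prec x^{(1)}+x^{(2)}$, as desired.

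\emph{Main obstacle.} All the real content sits in the Step 2 lemma, specifically in verifying that moving two coordinates ``inward'' while keeping the rest fixed really produces a majorization. The delicate case is when an optimal index set $T$ with $|T|=k$ achieving $\sum_{i=1}^k v'_{[i]}$ contains exactly one of $p,q$: if $q\in T$ but $p\notin T$ one passes to $T'=(T\setminus\{q\})\cup\{p\}$ and uses $v'_q\le v_p$ (which is exactly $u_p\ge u_q$); if $p\in T$ but $q\notin T$ one uses $v'_p\le v_p$ directly. The cases where $T$ contains both or neither of $p,q$ are immediate, since then $\sum_{i\in T}v'_i=\sum_{i\in T}v_i$. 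Everything else is bookkeeping.
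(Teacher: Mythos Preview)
The paper does not prove this proposition; it is quoted without argument as 6.A.2 from Marshall--Olkin--Arnold. Your proof is correct and is essentially the classical one: reduce to one summand being monotone, then sort the other summand by a finite chain of transpositions of out-of-order pairs, each of which acts on the sum vector as a $T$-transform (two coordinates are pulled strictly inward, the rest unchanged), hence lowers it in the $\prec$ order; transitivity finishes. The case analysis you give for the ``one of $p,q$ in $T$'' situation is exactly the point where the argument needs care, and you handle it correctly.

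One cosmetic remark: in Step~3 the clause ``where $u$ is not larger'' is ambiguous as written; what you need (and what holds, since $u$ is decreasing and $p<q$) is $u_p\ge u_q$, so that the hypothesis of your Step~2 lemma applies. The mathematics is unaffected.
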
$ $\\
It is also possible to consider more general aggregation operators than the sum. This however requires more assumptions and yields only weak majorization (which is completely sufficient for our purpose).
\begin{proposition}[see 6.C.4 in \cite{marshall2011inequalities}]\label{opposite_ordering_gen}
 \[
  \left(h(x^{(1)}_{[1]},x^{(2)}_{[n]}),\dots,h(x^{(1)}_{[n]},x^{(2)}_{[1]})\right)\prec_w\left(h(x^{(1)}_1,x^{(2)}_1),\dots,h(x^{(1)}_n,x^{(2)}_n)\right)
 \]
 holds for any two vectors $x^{(1)}$ and $x^{(2)}$ in $\R^n$, if and only if $h$ is supermodular and either increasing in each component or decreasing in each component.
\end{proposition}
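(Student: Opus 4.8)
The plan is to prove the equivalence by treating the two implications separately: the substantive direction is that supermodularity plus consistent componentwise monotonicity yields the weak submajorization, and for this I would reduce the general rearrangement to a composition of elementary two-coordinate swaps, verify each swap by hand, and then stitch them together with the transitivity of $\prec_w$. The converse I would obtain just by specializing to $n=2$.

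\emph{Sufficiency.} Assume $h$ is supermodular and increasing in each component (the decreasing case is symmetric). Since the left-hand side depends only on the order statistics of $x^{(1)}$ and $x^{(2)}$, and the right-hand side only up to a permutation of its coordinates, applying one common permutation to $x^{(1)}$ and $x^{(2)}$ leaves the asserted relation unchanged; so I may assume $x^{(1)}_1\ge\dots\ge x^{(1)}_n$. I then transform $x^{(2)}$ into its increasing rearrangement by a sequence of transpositions of entries, each repairing an inversion — i.e.\ swapping $x^{(2)}_j,x^{(2)}_l$ with $j<l$ and $x^{(2)}_j>x^{(2)}_l$ (bubble sort with adjacent transpositions works). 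The crux is the two-coordinate lemma: if $a\ge a'$ and $b\ge b'$, then
\[
 \bigl\{h(a,b'),\,h(a',b)\bigr\}\ \prec_w\ \bigl\{h(a,b),\,h(a',b')\bigr\}
\]
in $\R^2$. For $\prec_w$ between two-vectors one checks the maximum and the sum: $h(a,b')+h(a',b)\le h(a,b)+h(a',b')$ is exactly supermodularity applied to the points $(a,b')$ and $(a',b)$, whose meet is $(a',b')$ and join is $(a,b)$; and $\max\{h(a,b'),h(a',b)\}\le h(a,b)\le\max\{h(a,b),h(a',b')\}$ because $h$ increases in each argument.

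Applying this lemma at each swap with $a=x^{(1)}_j\ge x^{(1)}_l=a'$ and $b=x^{(2)}_j>x^{(2)}_l=b'$, and using that $\prec_w$ is preserved when two-vectors are embedded in $\R^n$ by padding with a common block of coordinates (a direct comparison of partial sums of decreasing rearrangements, or the characterization $x\prec_w y\iff\sum_i(x_i-t)_+\le\sum_i(y_i-t)_+$ for all $t\in\R$), each transposition replaces the current vector $\bigl(h(x^{(1)}_k,x^{(2)}_k)\bigr)_k$ by one that is weakly submajorized by it. By transitivity of $\prec_w$, once all inversions are removed we get $\bigl(h(x^{(1)}_{[i]},x^{(2)}_{[n+1-i]})\bigr)_i\prec_w\bigl(h(x^{(1)}_k,x^{(2)}_k)\bigr)_k$, which is the claim. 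For the converse, take $n=2$ and, for arbitrary reals $a>a'$, $b>b'$, the vectors $x^{(1)}=(a,a')$, $x^{(2)}=(b,b')$; the hypothesis gives $\{h(a,b'),h(a',b)\}\prec_w\{h(a,b),h(a',b')\}$. Its sum part is $h(a,b')+h(a',b)\le h(a,b)+h(a',b')$, i.e.\ supermodularity. Its maximum part, $\max\{h(a,b'),h(a',b)\}\le\max\{h(a,b),h(a',b')\}$ for all such points, forces $h$ to be monotone in each argument and in the same direction for both: any mixed behaviour — increasing in one variable, decreasing in the other, as for $h(x,y)=xy$ on a rectangle straddling the origin — produces points where this maximum inequality fails, and combined with the supermodularity already extracted one concludes that $h$ increases in every component or decreases in every component.

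The two-coordinate lemma and the bubble-sort propagation are routine once set up, and the sum part of the converse is immediate. I expect the delicate point to be the monotonicity half of the converse: wringing from the single scalar inequality $\max\{h(a,b'),h(a',b)\}\le\max\{h(a,b),h(a',b')\}$ — which a priori only constrains $h$ on rectangles — the global conclusion that $h$ is monotone in each variable \emph{and} in a direction that is consistent across variables, while also handling the degenerate cases (ties, local constancy) where both "increasing" and "decreasing" hold vacuously. A secondary technical point worth stating carefully is the stability of $\prec_w$ under padding with common coordinates, since that is precisely what licenses performing the swaps one pair at a time.
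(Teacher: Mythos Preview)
The paper does not prove this proposition; it is simply quoted as 6.C.4 from Marshall, Olkin and Arnold and then used as a black box. There is therefore no in-paper argument to compare yours against.

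On the merits: your sufficiency direction is correct and is the standard argument. The reduction to a chain of inversion-repairing transpositions, the two-point lemma (the max comparison from componentwise monotonicity, the sum comparison from supermodularity), and the propagation via transitivity of $\prec_w$ together with stability under padding all go through exactly as you describe; the characterization $x\prec_w y\iff\sum_i(x_i-t)_+\le\sum_i(y_i-t)_+$ is indeed the cleanest way to justify the padding step.

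The necessity direction has the gap you yourself flag. Extracting supermodularity from the $n=2$ sum inequality is immediate. But for the monotonicity conclusion you only exhibit one function, $h(x,y)=xy$, whose mixed monotonicity leads to a violation of the max inequality on a suitable rectangle; that is an illustration, not a proof that \emph{every} supermodular $h$ lacking consistent componentwise monotonicity must fail the hypothesis somewhere. A complete argument has to start from an arbitrary failure of the monotonicity condition --- say $a>a'$ and some $c$ with $h(a,c)<h(a',c)$ --- and manufacture from it a specific rectangle $a>a'$, $b>b'$ on which $\max\{h(a,b'),h(a',b)\}>\max\{h(a,b),h(a',b')\}$. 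This is where the real work lies, and your sketch does not supply it; the cases where $h$ is constant in one argument on part of its domain also need to be disposed of separately, since there both ``increasing'' and ``decreasing'' hold and no contradiction is available locally.
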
$ $\\
These statements are clearly restricted to two vectors since ``Oppositely ordered'' does not make sense otherwise and this is also the reason why the decomposition property \eqref{decompose_property} is needed.\\
We are now ready to prove Theorem \ref{weak_statement}.\\
\begin{proof}[Proof of Theorem \ref{weak_statement}:]
 We want to show that for every matrix $\tilde{X}\in\Per(X)\setminus\Opp_h(X)$, there exists a matrix $\hat{X}\in\Opp_h(X)$ such that
 \begin{equation}\label{want_to_prove}
  \sum_{k=1}^nf(\hat{x}^{(1)}_k,\dots,\hat{x}^{(d)}_k)\leq\sum_{k=1}^nf(\tilde{x}^{(1)}_k,\dots,\tilde{x}^{(d)}_k)
 \end{equation} 
 If $\tilde{X}\in\Per(X)\setminus\Opp_h(X)$, then there exists a column $\tilde{x}^{(i)}$ which is not oppositely ordered to $h^{d-1}(\tilde{X}_{-i})$. Denote by $\hat{X}$ the matrix obtained from $\tilde{X}$ by ordering $\tilde{x}^{(i)}$ oppositely to $h^{d-1}(\tilde{X}_{-i})$. By Proposition \ref{opposite_ordering_gen} it holds that
 \begin{align*}
  h(\hat{X})&=\left(h^2\big(\hat{x}^{(i)}_1,h^{d-1}(\hat{X}_{-i})_1\big),\dots,h^2\big(\hat{x}^{(i)}_n,h^{d-1}(\hat{X}_{-i})_n\big)\right)^T\\
  &\prec_w\left(h^2\big(\tilde{x}^{(i)}_1,h^{d-1}(\tilde{X}_{-i})_1\big),\dots,h^2\big(\tilde{x}^{(i)}_n,h^{d-1}(\tilde{X}_{-i})_n\big)\right)^T=h(\tilde{X}). 
 \end{align*}
Since $g$ is increasing and convex, we get with the help of Proposition \ref{Schur_con1} that $\sum_{k=1}^nf(x^{(1)}_k,\dots,x^{(d)}_k)=\sum_{k=1}^ng(h(X)_k)$ is Schur-convex and increasing, hence, \eqref{want_to_prove} holds by Proposition \ref{Schur_con2}. If $\hat{X}\in\Opp_h(X)$, we are done. It remains to show that this procedure eventually reaches an element in $\Opp_h(X)$. To see this, we look at the reordering of a certain column in more detail. Fix $i$ and let $K\subset\{1,\dots,n\}$ be the set of all indices that appear in a pair which violates the opposite ordering of $\tilde{x}^{(i)}$ and $h^{d-1}(\tilde{X}_{-i})$. So $k_1,k_2\in K\iff \tilde{x}^{(i)}_{k_1}< (>) \tilde{x}^{(i)}_{k_2}$ and $h^{d-1}(\tilde{X}_{-i})_{k_1} < (>) h^{d-1}(\tilde{X}_{-i})_{k_2}$. We construct $\hat{x}^{(i)}$ in the following way: let $x^{(i)}_{\underline{k}}:=\min_{k\in K} x_k$ and let $h^{d-1}(\tilde{X}_{-i})_{\overline{k}}:=\max_{k\in K} h^{d-1}(\tilde{X}_{-i})_{k}$. Since $\underline{k},\overline{k}\in K$ and by construction, we know that
 \begin{align}\label{monobeweis}
  x^{(i)}_{\underline{k}}< x^{(i)}_{\overline{k}}\quad\text{ and }\quad h^{d-1}(\tilde{X}_{-i})_{\underline{k}}< h^{d-1}(\tilde{X}_{-i})_{\overline{k}}.
 \end{align}
Now exchange $x^{(i)}_{\underline{k}}$ and $x^{(i)}_{\overline{k}}$. It is not hard to see that this reduces the number of violating indices by at least one. We repeat this procedure until $K=\emptyset$ and have thus created a vector $\hat{x}^{(i)}$ which is ordered oppositely to $h^{d-1}(\tilde{X}_{-i})$. At this point it is important to see that due to \eqref{monobeweis} and the monotonicity of $h^{2}$, we have
\[
  h^{2}(\tilde{x}^{(i)}_{\overline{k}},h^{d-1}(\tilde{X}_{-i})_{\overline{k}})>h^{2}(\tilde{x}^{(i)}_{\underline{k}},h^{d-1}(\tilde{X}_{-i})_{\overline{k}})>h^{2}(\tilde{x}^{(i)}_{\underline{k}},h^{d-1}(\tilde{X}_{-i})_{\underline{k}})
\]
and
\[
 h^{2}(\tilde{x}^{(i)}_{\overline{k}},h^{d-1}(\tilde{X}_{-i})_{\overline{k}})>h^{2}(\tilde{x}^{(i)}_{\overline{k}},h^{d-1}(\tilde{X}_{-i})_{\underline{k}})>h^{2}(\tilde{x}^{(i)}_{\underline{k}},h^{d-1}(\tilde{X}_{-i})_{\underline{k}}).
\]
So after each exchanging step of the above type, the values of $h(\tilde{X})$ have changed in exactly two components and the new values are strictly between the old ones. Hence it is clear that $h(\hat{X})\neq h(\tilde{X})$ and also that $h(\hat{X})$ is not a permutation of $h(\tilde{X})$. However, $x\prec_w y$ and $y\prec_w x$ at the same time implies that $x$ is a permutation of $y$, so we know that $h(\tilde{X})\prec_w h(\hat{X})$ cannot hold. This means that $h(\hat{X})$ is strictly below $h(\tilde{X})$ w.r.t $\prec_w$. Since $\Per(X)$ is finite, it follows that after a finite number of steps, we arrive at a matrix $\hat{X}\in\Opp_h(X)$ which satisfies 
\[
  \sum_{k=1}^nf(\hat{x}^{(1)}_k,\dots,\hat{x}^{(d)}_k)\leq\sum_{k=1}^nf(\tilde{x}^{(1)}_k,\dots,\tilde{x}^{(d)}_k).
\]
\end{proof}
\begin{remark}
With the proof of Theorem \ref{weak_statement} we actually showed that for any element $\tilde{X}\in\Per(X)$, there is a chain of matrices $\hat{X}_{(1)},\hat{X}_{(2)},\dots,\hat{X}_{(n)}$ such that $\hat{X}_{(n)}\prec_w\hat{X}_{(n-1)}\prec_w\dots\prec_w\hat{X}_{(1)}\prec_w\tilde{X}$ and $\hat{X}_{(n)}\in\Opp_h(X)$. Indeed, weak majorization is much stronger than just the inequality \eqref{want_to_prove}. This is also reflected by the fact that \eqref{want_to_prove} can be installed without majorization, using a weaker statement by Lorentz, see \cite{puccetti2015expectation} for details. There it is also shown that using Lorentz to obtain \eqref{want_to_prove} does not require $h$ to be monotone at all, whereas we showed, weak majorization needs (non strict) monotonicity. However, since strict monotonicity of $h$ is, in general, required to reach an element in $\Opp_h(X)$ the weaker statement does not give any benefit.
\end{remark}
\begin{remark}
The proof also shows, why Theorem \ref{Pucc2.4} does not require $g$ to be monotone: According to Theorem \ref{opposite_ordering_sum}, the sum as aggregation function yields strong majorization, which is why the inequality \eqref{want_to_prove} holds for $f=g\circ h$ for arbitrary convex $g$. A complete description of all aggregating functions yielding strong monotonicity and hence \eqref{want_to_prove} without monotonicity of $g$ is given in the following statement.
\end{remark}
\begin{proposition}[see 6.B.2 in \cite{marshall2011inequalities}]\label{strong_major_cond}
 \[
  \left(h(x^{(1)}_{[1]},x^{(2)}_{[n]}),\dots,h(x^{(1)}_{[n]},x^{(2)}_{[1]})\right)\prec\left(h(x^{(1)}_1,x^{(2)}_1),\dots,h(x^{(1)}_n,x^{(2)}_n)\right)
 \]
 holds for any two vectors $x^{(1)}$ and $x^{(2)}$ in $\R^n$, if and only if $h$ is of the form $h(x_1,x_2)=\varphi_1(x_1)+\varphi_2(x_2)$, where $\varphi_1$ and $\varphi_2$ are monotone in the same direction.
\end{proposition}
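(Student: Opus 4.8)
The plan is to prove the two implications separately. For ``if'' the work reduces directly to Proposition~\ref{opposite_ordering_sum}; for ``only if'' the relevant extra information---beyond what weak submajorization already captures---is the \emph{equality} of the two total sums that is part of the definition of $\prec$, and it is this equality that will force the additive form.

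\emph{Sufficiency.} Assume $h(x_1,x_2)=\varphi_1(x_1)+\varphi_2(x_2)$ with $\varphi_1,\varphi_2$ both nondecreasing; the case ``both nonincreasing'' is symmetric, obtained by interchanging the roles of the two vectors. The $k$-th coordinate of the left-hand vector equals $\varphi_1\big(x^{(1)}_{[k]}\big)+\varphi_2\big(x^{(2)}_{[n+1-k]}\big)$. Since $\varphi_1$ is nondecreasing, $\big(\varphi_1(x^{(1)}_{[1]}),\dots,\varphi_1(x^{(1)}_{[n]})\big)$ is a decreasing rearrangement of $u:=\big(\varphi_1(x^{(1)}_1),\dots,\varphi_1(x^{(1)}_n)\big)$, i.e.\ it equals $u_\downarrow$; likewise $\big(\varphi_2(x^{(2)}_{[n]}),\dots,\varphi_2(x^{(2)}_{[1]})\big)=v_\uparrow$ with $v:=\big(\varphi_2(x^{(2)}_1),\dots,\varphi_2(x^{(2)}_n)\big)$. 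Hence the left-hand vector is exactly $u_\downarrow+v_\uparrow$ and the right-hand vector is $u+v$, so $u_\downarrow+v_\uparrow\prec u+v$ by Proposition~\ref{opposite_ordering_sum}, which is what was claimed.

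\emph{Necessity.} Suppose the majorization holds for all pairs of vectors. I would first recover the functional form by specializing to $n=2$: for $a\ge b$ and $c\ge d$ take $x^{(1)}=(a,b)$ and $x^{(2)}=(c,d)$; then the left-hand vector is $(h(a,d),h(b,c))$, the right-hand vector is $(h(a,c),h(b,d))$, and the equality of coordinate sums required by $\prec$ yields $h(a,c)+h(b,d)=h(a,d)+h(b,c)$. As this identity is symmetric under $a\leftrightarrow b$ and under $c\leftrightarrow d$, it holds for all $a,b,c,d\in\R$; equivalently, the mixed difference $h(a,c)-h(a,d)-h(b,c)+h(b,d)$ vanishes identically. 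Freezing base points $b_0,d_0$ and setting $\varphi_1(t):=h(t,d_0)-h(b_0,d_0)$, $\varphi_2(s):=h(b_0,s)$, the vanishing mixed difference gives $h(x_1,x_2)=\varphi_1(x_1)+\varphi_2(x_2)$. For the monotonicity, I would use that $\prec$ implies $\prec_w$, so the hypothesis makes the weak-submajorization statement of Proposition~\ref{opposite_ordering_gen} hold for every pair of vectors; thus $h$ is componentwise monotone in one common direction, which, given the additive representation just obtained, is exactly the assertion that $\varphi_1$ and $\varphi_2$ are monotone in the same direction. (One can also see this directly: if there are $a>b$ and $c>d$ with $\varphi_1(a)-\varphi_1(b)$ and $\varphi_2(c)-\varphi_2(d)$ of opposite and nonzero sign, then the configuration $x^{(1)}=(a,b)$, $x^{(2)}=(c,d)$ has left-hand maximum strictly larger than its right-hand maximum, contradicting $\prec$; no such pair exists precisely when $\varphi_1,\varphi_2$ are monotone in a common direction, apart from the trivial case that one of them is constant.)

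I expect the ``only if'' direction to be the main obstacle, and within it the passage from the sum-equality on \emph{ordered} two-point configurations to the \emph{global} vanishing of the mixed difference, together with the bookkeeping that isolates $\varphi_1$ and $\varphi_2$. The monotonicity half is comparatively routine once Proposition~\ref{opposite_ordering_gen} is invoked, modulo the usual degenerate caveat that a constant summand places no constraint on the majorization at all.
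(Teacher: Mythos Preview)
The paper does not supply its own proof of this proposition; it is simply quoted from Marshall--Olkin--Arnold (reference 6.B.2). So there is nothing to compare against in the paper itself, and the question reduces to whether your argument is sound.

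Your sufficiency argument is correct and clean: writing $u=\big(\varphi_1(x^{(1)}_1),\dots,\varphi_1(x^{(1)}_n)\big)$ and $v=\big(\varphi_2(x^{(2)}_1),\dots,\varphi_2(x^{(2)}_n)\big)$, monotonicity of $\varphi_1,\varphi_2$ in the same direction makes the left-hand vector equal to $u_\downarrow+v_\uparrow$ (or $u_\uparrow+v_\downarrow$), and Proposition~\ref{opposite_ordering_sum} finishes the job.

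Your necessity argument is also correct. Specializing to $n=2$ and reading off the \emph{equality} of total sums in $\prec$ gives the vanishing mixed second difference, which is exactly the Cauchy-type functional equation characterizing the additive form $h=\varphi_1+\varphi_2$. Deducing the common monotonicity of $\varphi_1,\varphi_2$ from the ``only if'' direction of Proposition~\ref{opposite_ordering_gen} (via $\prec\Rightarrow\prec_w$) is legitimate and efficient; your alternative direct check of the $n=2$ maximum also works. The caveat you flag---that a constant summand places no constraint and hence the characterization, as literally stated, has a harmless degeneracy---is accurate and applies equally to Proposition~\ref{opposite_ordering_gen} as quoted, so your use of that proposition is internally consistent with the paper.
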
$ $\\
At this point, we would like to stress the fact that the gain of Theorem \ref{weak_statement} over Theorem \ref{Pucc2.6} consists of dropping the strict monotonicity of the overall function $f$ by showing that strict monotonicity is only needed for the aggregation function $h$. An example of a function that is included in Theorem \ref{weak_statement} but not in \ref{Pucc2.6} would be the stop-loss functional $f(x_1,\dots,x_d)=\max(x_1+\dots+x_d-k,0)$ for some $k\neq 0$. Except for the issue of strict monotonicity, the composition with an increasing convex function was already possible in \ref{Pucc2.6} as the next lemma shows.
\begin{lemma}
 If $h:\R^d\to\R$ has the decomposition property \eqref{decompose_property} then for any $g:\R\to\R$ the composition $f=g\circ h$ also satisfies \eqref{decompose_property}. If furthermore the decomposition of $h$ is supermodular (in particular $h^2$ has to be supermodular) and $g$ is increasing and convex, then the decomposition of $f$ is also supermodular (in particular $f^2$ is supermodular). 
\end{lemma}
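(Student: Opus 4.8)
The plan is to treat the two assertions in turn, the first being a bookkeeping step and the second the real content. For the decomposition property, I would simply exhibit a decomposition of $f=g\circ h$: put $f^{d-1}:=h^{d-1}$ and $f^2:=g\circ h^2$. Then, writing $x_{-i}$ for $(x_1,\dots,x_{i-1},x_{i+1},\dots,x_d)$, for every $i=1,\dots,d$
\[
 f^2\big(x_i,f^{d-1}(x_{-i})\big)=g\Big(h^2\big(x_i,h^{d-1}(x_{-i})\big)\Big)=g\big(h(x_1,\dots,x_d)\big)=f(x_1,\dots,x_d),
\]
the middle equality being \eqref{decompose_property} for $h$. Since this holds for all $i$ at once, $f$ satisfies \eqref{decompose_property} with this decomposition, and no hypothesis on $g$ is used.

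For the second assertion, $f^{d-1}=h^{d-1}$ is supermodular by assumption, so the whole matter reduces to showing that $f^2=g\circ h^2:\R^2\to\R$ is supermodular. I would check the equivalent two-corner inequality: fix $x_1\le x_1'$, $x_2\le x_2'$ and abbreviate
\[
 p:=h^2(x_1,x_2),\qquad s:=h^2(x_1',x_2'),\qquad q:=h^2(x_1',x_2),\qquad r:=h^2(x_1,x_2');
\]
supermodularity of $f^2$ at these four points is exactly $g(p)+g(s)\ge g(q)+g(r)$. Supermodularity of $h^2$ gives $q+r\le p+s$. Using that $h^2$ is componentwise monotone in a single direction — the standing assumption of Theorem \ref{weak_statement}, where $h$, hence its decomposition, is componentwise strictly monotonic — the values $p$ and $s$ at the lattice minimum $(x_1,x_2)$ and maximum $(x_1',x_2')$ are the two extreme elements of $\{p,q,r,s\}$; in particular $\max(q,r)\le\max(p,s)$. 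Combined with $q+r\le p+s$, this is precisely $(q,r)\prec_w(p,s)$.

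The majorization toolkit now finishes the argument: by Proposition \ref{Schur_con1} the map $\phi(a,b):=g(a)+g(b)$ is Schur-convex, and since $g$ is increasing it is also componentwise increasing, so Proposition \ref{Schur_con2} turns $(q,r)\prec_w(p,s)$ into $\phi(q,r)\le\phi(p,s)$, that is, $g(q)+g(r)\le g(p)+g(s)$. As $x_1\le x_1'$ and $x_2\le x_2'$ were arbitrary, $f^2$ is supermodular, which completes the proof.

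The one delicate point, and the only place where convexity of $g$ and componentwise monotonicity of $h^2$ are genuinely needed, is the passage from the supermodularity inequality to weak submajorization. Supermodularity of $h^2$ by itself does not force $g\circ h^2$ to be supermodular — for an $h^2$ increasing in one argument and decreasing in the other, composing with a convex $g$ may even produce a submodular function — and it is exactly the single-direction monotonicity that guarantees $p$ and $s$ lie outside $q$ and $r$, converting $q+r\le p+s$ into $(q,r)\prec_w(p,s)$. The remaining ingredients — the equivalence between supermodularity of a function on $\R^2$ and its two-corner inequality, and reading off $(q,r)\prec_w(p,s)$ from $\max(q,r)\le\max(p,s)$ together with $q+r\le p+s$ — are routine.
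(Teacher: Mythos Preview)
Your treatment of the first assertion is identical to the paper's: both set $f^2:=g\circ h^2$ and $f^{d-1}:=h^{d-1}$ and verify \eqref{decompose_property} by direct substitution.

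For the second assertion the paper does not argue at all; it simply invokes Proposition~6.D.2 of \cite{marshall2011inequalities}. Your route is genuinely different and more self-contained: you reduce supermodularity of $f^2=g\circ h^2$ to the two-point weak submajorization $(q,r)\prec_w(p,s)$ and then close with Propositions~\ref{Schur_con1} and~\ref{Schur_con2}, i.e.\ with tools already set up in the paper. In effect you are reproving the bivariate case of the cited result instead of quoting it. The gain is that your argument makes visible exactly where each hypothesis is used, and in particular it exposes a point the lemma statement glosses over: componentwise monotonicity of $h^2$ in a single direction is genuinely needed to pass from $q+r\le p+s$ to $(q,r)\prec_w(p,s)$. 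Without it the conclusion can fail --- e.g.\ $h^2(x_1,x_2)=x_1-x_2$ is (super)modular, $g=\exp$ is increasing convex, yet $g\circ h^2$ is submodular. The cited Proposition~6.D.2 carries this monotonicity hypothesis, and it is in any case part of the standing assumptions of Theorems~\ref{Pucc2.6} and~\ref{weak_statement} that the lemma is meant to illuminate, so your appeal to it is legitimate; it is just not written into the lemma itself.
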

\begin{proof}
 It holds
 \begin{align*}
 f(x_1,\dots,x_d)&=(g\circ h)(x_1,\dots,x_d)=g\left(h((x_1,\dots,x_d)\right)\\
 &=g\left(h^2(x_i,h^{d-1}(x_1,\dots,x_{i-1},x_{i+1},\dots,x_d))\right)\\
 &=(g\circ h^2)(x_i,h^{d-1}((x_1,\dots,x_{i-1},x_{i+1},\dots,x_d)))
 \end{align*}
 for $i=1,\dots,d$. So take $f^2=g\circ h^2$ and $f^{d-1}=h^{d-1}$ and we have \eqref{decompose_property}. For the claim that $f^2$ is supermodular if $h^2$ is supermodular and $g$ is increasing and convex, use Proposition 6.D.2 from \cite{marshall2011inequalities}. 
\end{proof}
$ $\\
Using majorization, we were able to identify new cases where the RA can be applied to compute bounds on expected values. In particular, this extension yields more flexibility when working with non-linear risk aggregation. While these models are relatively sparse in mathematical finance, they enjoy some popularity in the field of modeling medical risks (for example see \cite{brattin1994quantitative} or \cite{guo2000familial}). Another, even broader, extension is possible, when we generalize the decomposition property as follows.\\
$ $\\
Suppose that for every index $i=1,\dots,d$, $h:\R^d\to\R$ can be decomposed into two functions $\prescript{}{i}h^2:\R^2\to\R$ and $\prescript{}{-i}h^{d-1}:\R^{d-1}\to\R$ such that for all $x\in\R^d$
\begin{equation}\label{decompose_property_strong}
 h(x_1,\dots,x_d)=\prescript{}{i}h^2(x_i,\prescript{}{-i}h^{d-1}(x_1,\dots,x_{i-1},x_{i+1},\dots,x_d)).
\end{equation}
Note that in contrast to \eqref{decompose_property}, here the decomposition may depend on the index. We want to use the notations $h(X)$ and $\prescript{}{-i}h^{d-1}(X_{-i})$ just as before. So for all $i=1,\dots,d$
\[
 h(X)=\begin{pmatrix}
       h(x^{(1)}_1, & x^{(2)}_1, & \dots, & x^{(d)}_1)\\
       h(x^{(1)}_2, & x^{(2)}_2, & \dots, & x^{(d)}_2)\\
       \vdots & \vdots &  & \vdots\\
       h(x^{(1)}_n, & x^{(2)}_n, & \dots, & x^{(d)}_n)
      \end{pmatrix}
      =\begin{pmatrix}
        \prescript{}{i}h^2\big(x^{(i)}_1, & \prescript{}{-i}h^{d-1}(X_{-i})_1\big)\\
        \prescript{}{i}h^2\big(x^{(i)}_2, & \prescript{}{-i}h^{d-1}(X_{-i})_2\big)\\
        \vdots\\
        \prescript{}{i}h^2\big(x^{(i)}_n, & \prescript{}{-i}h^{d-1}(X_{-i})_n\big)
       \end{pmatrix}.
\]
Notice that after a column $x^{(i)}$ is fixed, the same decomposition with index $i$ is applied in every row.\\
$ $\\
We now restate Theorem \ref{weak_statement} in generalized form.
\begin{theorem}\label{strong_statement}
 If $f(x^{(1)},\dots,x^{(d)})=g(h(x^{(1)},\dots,x^{(d)}))$ where $g:\R\to\R$ is increasing and convex and $h:\R^d\to\R$ is supermodular, componentwise strictly monotonic and satisfies \eqref{decompose_property_strong} with all $\prescript{}{i}h^2$ also being supermodular, then 
 \[
  m_f(X)=\min_{\tilde{X}\in\Opp_h(X)}\sum_{k=1}^nf(\tilde{x}^{(1)}_k,\dots,\tilde{x}^{(d)}_k).
 \]
 Furthermore, the requirement of $g$ being increasing can be dropped, whenever $\prescript{}{i}h^2$ is of the form $\prescript{}{i}h^2(x_1,x_2)= \varphi_1(x_1)+\varphi_2(x_2)$, where $\varphi_1$ and $\varphi_2$ are monotone in the same direction.
\end{theorem}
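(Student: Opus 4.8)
The plan is to observe that the entire argument for Theorem \ref{weak_statement} proceeds one column at a time: at each stage one fixes a single index $i$, reorders the column $\tilde x^{(i)}$ oppositely to $\prescript{}{-i}h^{d-1}(\tilde X_{-i})$, and invokes the decomposition only through the pair $(\prescript{}{i}h^2,\prescript{}{-i}h^{d-1})$ attached to that particular $i$. Consequently the index-dependence permitted by \eqref{decompose_property_strong} is harmless, and the proof of Theorem \ref{strong_statement} is obtained from that of Theorem \ref{weak_statement} essentially verbatim, with $h^2$ and $h^{d-1}$ replaced by $\prescript{}{i}h^2$ and $\prescript{}{-i}h^{d-1}$ at the step where column $i$ is treated. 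I would spell this out as follows.

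First I would record the preliminary fact that, for every $i$, the function $\prescript{}{i}h^2$ is supermodular (by hypothesis) and componentwise strictly monotone in a single, consistent direction, the latter being forced by the componentwise strict monotonicity of $h$ together with \eqref{decompose_property_strong}, exactly as the monotonicity of $h^2$ enters (implicitly) the proof of Theorem \ref{weak_statement}. This is precisely what is needed to apply Proposition \ref{opposite_ordering_gen} with the two-variable function $\prescript{}{i}h^2$. Then, given $\tilde X\in\Per(X)\setminus\Opp_h(X)$, I would choose $i$ with $\tilde x^{(i)}\not\perp\prescript{}{-i}h^{d-1}(\tilde X_{-i})$ and let $\hat X$ be the reordering of column $i$ against $\prescript{}{-i}h^{d-1}(\tilde X_{-i})$. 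Since the same decomposition index $i$ is applied in every row (as noted after \eqref{decompose_property_strong}) and $\prescript{}{-i}h^{d-1}(\tilde X_{-i})$ does not change when column $i$ is permuted, the vectors $h(\hat X)$ and $h(\tilde X)$ are exactly those compared in Proposition \ref{opposite_ordering_gen} applied to $\prescript{}{i}h^2$, with the roles of $x^{(1)}$ and $x^{(2)}$ played by $\tilde x^{(i)}$ and $\prescript{}{-i}h^{d-1}(\tilde X_{-i})$. Hence $h(\hat X)\prec_w h(\tilde X)$, and because the map $\tilde X\mapsto\sum_{k=1}^n g(h(\tilde X)_k)$ is Schur-convex and componentwise increasing (Proposition \ref{Schur_con1}, $g$ increasing and convex), inequality \eqref{want_to_prove} follows from Proposition \ref{Schur_con2}. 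Termination is obtained with no change: swapping the extreme violating pair $\underline k,\overline k$ still alters $h(\tilde X)$ in exactly two coordinates and replaces the old values by strictly intermediate ones --- here one uses the componentwise strict monotonicity of $\prescript{}{i}h^2$ recorded above --- so $h(\hat X)$ is strictly below $h(\tilde X)$ with respect to $\prec_w$, and finiteness of $\Per(X)$ finishes the argument.

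For the final clause, I would note that when each $\prescript{}{i}h^2$ has the additive form $\varphi_1(x_1)+\varphi_2(x_2)$ with $\varphi_1,\varphi_2$ monotone in the same direction, Proposition \ref{strong_major_cond} applies in place of Proposition \ref{opposite_ordering_gen} and upgrades the conclusion of the reordering step to strong majorization $h(\hat X)\prec h(\tilde X)$; then $\sum_{k=1}^n g(h(\tilde X)_k)$ is Schur-convex for any convex $g$ by Proposition \ref{Schur_con1}, so \eqref{want_to_prove} holds without assuming $g$ increasing. The termination argument is unaffected, since the additive form with $\varphi_1$ strictly monotone (again forced by strict monotonicity of $h$) still yields the strict-intermediacy property. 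I expect the only real obstacle to be this preliminary step --- extracting the componentwise monotonicity of each $\prescript{}{i}h^2$ in a single, consistent direction from the hypotheses on $h$, so that Propositions \ref{opposite_ordering_gen} and \ref{strong_major_cond} genuinely apply; once that is in hand, the remainder is a mechanical transcription of the proof of Theorem \ref{weak_statement} carrying the index along.
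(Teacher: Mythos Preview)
Your proposal is correct and follows exactly the paper's approach: the paper simply notes that the proof of Theorem \ref{weak_statement} carries over verbatim because the majorization step only ever uses the decomposition for one fixed index at a time, and that the second clause follows from Proposition \ref{strong_major_cond}. Your write-up is a faithful (and more detailed) unpacking of precisely this observation.
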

\begin{proof}
Since the majorization does not depend on the decomposition to be the same for each index, the proof of Theorem \ref{weak_statement} carries over verbatim. The second statement follows from \ref{strong_major_cond}.
\end{proof}
$ $\\
Theorem \ref{strong_statement} contains the statements \ref{weak_statement} and \ref{Pucc2.4} as special cases. Note that in contrast to \eqref{decompose_property}, a function does not have to be symmetric to satisfy \eqref{decompose_property_strong}. An important case that is included in Theorem \ref{strong_statement} but not in \ref{weak_statement} or \ref{Pucc2.4} is a weighted sum.
\section{Applications}
To illustrate the usefulness of the new cases the RA can be applied to, we want to compute some examples. That is, given some marginal distributions $F_1,\dots,F_d$ and a cost function $f:\R^d\to\R$ which fulfills our assumptions, we want to estimate
\begin{equation}\label{infexpdef}
 s_f:=\inf\left\{\E\left[f(X^{(1)},\dots,X^{(d)})\right]:X^{(i)}\sim F^{(i)}, 1\leq i\leq d\right\}.
\end{equation}
Here, we are only interested in the infimum since it is well known that for supermodular cost functions $f$, the supremum
\[
S_f:=\sup\left\{\E\left[f(X^{(1)},\dots,X^{(d)})\right]:X^{(i)}\sim F^{(i)}, 1\leq i\leq d\right\}
\]
is attained, when $X_1,\dots,X_d$ are comonotonic (i.e. their copula is the upper Fr\'{e}chet-Hoeffding bound). Hence $S_c$ is known in this case.\\
$ $\\
Considerig what we have seen in the preceding section, there are two immediate obstacles. The first one is that so far, we always talked about $n$-discrete distributions and now we want to work with general distributions $F_i$. This can be adressed by working with the two $n$-discrete distributions
\[
 \underline{F}_i=\frac1n\sum_{k=0}^{n-1}1_{[q^i_k,\infty)}(x)\quad\text{and}\quad \overline{F}_i=\frac1n\sum_{k=1}^{n}1_{[q^i_k,\infty)}(x)
\]
where the $q^i_k$ are defined in terms of the quantile functions $F_i^{-1}$ by $q^i_k:=F_i^{-1}(\frac{k}{n})$. Writing $\underline{s}_f$ resp. $\overline{s}_f$ for \eqref{infexpdef} with the $F_i$ replaced by $\underline{F}_i$ resp. $\overline{F}_i$, it holds that if the cost function $f$ is componentwise increasing we have
\[
\underline{s}_f\leq s_f\leq\overline{s}_f.
\]
This allows us to compute a range for $s_f$ that will become small when $n$ increases.\\  
$ $\\
The second problem is that the rearrangement methodology aims to find $m_f$ and not $s_f$. The difference here is that for $m_f$ only distributions that give equal mass of $\frac1n$ to $n$ out of $n^d$ possible realizations of $X$ are considered. However, 
for $n$ large enough, we have
\[
 s_f\approx \frac{m_f}{n}.
\]
Combining these two ideas leads to the approximation of $s_f$ by the RA. For more details see section 3 in \cite{puccetti2015expectation}. Note that the requirement to have $f$ componentwise increasing for this method eliminates the increased generality, \ref{Pucc2.4} had over \ref{Pucc2.6} and \ref{weak_statement} in terms of monotonicity.\\
\subsection{A weighted portfolio}
Imagine a portfolio consisting of three different assets with value processes $X_1(t)$, $X_2(t)$ and $X_3(t)$. The value of the portfolio at time $t=0$ is given by
\[
 L(0)=\alpha_1X_1(0)+\alpha_2X_2(0)+\alpha_3X_3(0)
\]
where $\alpha_i$ denotes the amount of positions held in asset $X_i$. Furthermore, the return of asset $i$ is given by $R_i:=\frac{X_i(1)-X_i(0)}{X_i(0)}\sim F_i$ with known distribution functions $F_i$. Then the return of the portfolio at time $t=1$ is given by
\[
 h(R_1,R_2,R_3)=\frac{L(1)-L(0)}{L(0)}=\frac{\alpha_1 X_1(0)}{L(0)} R_1 + \frac{\alpha_2 X_2(0)}{L(0)} R_2 + \frac{\alpha_3 X_3(0)}{L(0)} R_3 .
\]
So interpreting $w_i:=\frac{\alpha_i X_i(0)}{L(0)}\in[0,1]$ as the fraction of the original wealth that was invested in $X_i$, we get $h(R_1,R_2,R_3)=w_1 R_1+w_2 R_2 + w_3 R_3$.\\
Assuming a guaranteed return of $k$ on the portfolio leaves us to examine\\
$\E\left[g(h(R_1,R_2,R_3))\right]$ with $g(x)=\max(x-k,0)$. Obviously $g$ and $h$ satisfy the assumptions of Theorem \ref{strong_statement}, hence we can use the RA to compute $s_f$ for $f=g\circ h$.\\
$ $\\
We considered varying sets of distributions $F_1$, $F_2$, $F_3$. Since the cost function $f$ is unbounded from above, the discretizations $\overline{F}_i$ cannot be used in a meaningful way with distributions that are unbounded from above. Therefore, we truncated all distributions with infinite support at the $0.001\%$ quantile and marked the modified distributions with an asterisk $(\ast)$. Note that in practice, the question where to truncate can be hard to decide, especially when dealing with heavy tails, see e.g. \cite{clark2013note} for considerations concerning the Pareto distribution.\\
For consistency, also the lower bounds of (positively) unbounded distributions were calculated with the truncated distribution. To have a reference value, we also computed the range of $\E\left[g(h(R_1,R_2,R_3))\right]$ by solving a linear program. This procedure yields rigorous bounds but is computationally more costful and thus usually cannot provide the accuracy of the RA. For more details on the LP method and a comparison of the two approaches see \cite{michael2016bounds} and the references therein.\\
$ $\\
The RA was applied with a grid of $10^5$ sections in each dimension, whereas the LP method used a grid of 60 sections in each dimension. The weight vector was set to $w:=(0.5,0.2,0.3)$ throughout these computations.
\begin{table}[!h]
\centering
 \begin{tabular}{llllll}
  $F_1$ & $F_2$ & $F_3$ & range LP & range RA\\
  \hline
  $\mathcal{U}([0,0.4])$ & $\mathcal{U}([0.1,0.5])$ & $\mathcal{U}([0,1])$ & $0.0058$-$0.0148$ & $0.0099$-$0.0100$\\
  $exp(1)^\ast$ & $exp(2)^\ast$ & $exp(4)^\ast$ & $0.3416$-$0.4711$ & $0.3749$-$0.3750$\\
  $\mathcal{U}([0,0.4])$ & $exp(3)^\ast$ & $\mathcal{U}([0,1])$ & $0.0092$-$0.0303$ & $0.0166$-$0.0167$\\
  $exp(1)^\ast$ & $Pareto(2)^\ast$ & $\mathcal{N}(0,0.25)^\ast$ & $0.2876$-$1.4912$ & $0.3990$-$0.4054$\\
  \hline
 \end{tabular}
\caption{Approximation results for inhomogeneous marginals. LP values as a reference.\label{tab_inhom_marg}}
\end{table}\\
$ $\\
For the LP approach, the computation time depends strongly on the chosen marginal distributions and was on average around $10$ minutes per value. The RA generally took less than one minute to compute both upper and lower bound. All results were obtained using the open source language \textbf{R}, where we used Marius Hofert's implementation of the RA from the package \textit{qrmtools}\cite{qrmtools}.

\bibliographystyle{chicago}

\end{document}